\newtheorem{thm}{Theorem}[section]
\newtheorem{cor}[thm]{Corollary}
\newtheorem{lem}[thm]{Lemma}
\theoremstyle{definition}
\newtheorem{defn}[thm]{Definition}
\theoremstyle{remark}
\theoremstyle{definition}
\theoremstyle{definition}
\newtheorem{ex}[thm]{Example}
\theoremstyle{definition}
\numberwithin{equation}{section} 
\title{Ordered algebraic structures and classification of Semifields}
\author{Guillaume Tahar} 
\address[Guillaume Tahar]{Institut de Math{\'e}matiques de Jussieu - UMR CNRS 7586}
\email{guillaume.tahar@imj-prg.fr}
\date{July 18, 2017}
\keywords{Fields, Semifields, Idempotent structures}
\begin{document}
\begin{abstract}
Semifields are semirings in which every nonzero element has a multiplicative inverse. A rough classification uses the characteristic of the semifield, that is the isomorphism type of the semifield generated by the two neutral elements. For every characteristic, we provide a structure theorem that reduces the classification of semifields to the classification of better-known algebraic structures. Every semifield of characteristic $p$ is actually a field. There is an equivalence between semifields of characteristic one and lattice-ordered groups. Strict semifields of characteristic zero are quotients of cancellative semifields and there is an equivalence between concellative strict semifields and a particular class of partially ordered rings.\newline
\end{abstract}
\maketitle
\setcounter{tocdepth}{1}
\tableofcontents

\section{Introduction}

Recent trends such as tropical geometry tend to consider idempotent algebraic structures as natural objects of algebraic geometry. Semirings and semifields have a great importance in the Connes-Consani generalization of algebraic geometry, see \cite{CC} and \cite{L}.\newline
In this short note, we provide a systematic correspondance between semifields of a given characteristic and better known algebraic structures. In this way, we are certain that the examples of semifields we already know cover all possible phenomena.

\begin{defn}
A \textbf{semifield} is a set $F$ endowed with two associative laws $+$ an $\times$ satisfying the following conditions:\newline
(i) $(F,+)$ is a commutative monoid with a neutral element $0$.\newline
(ii) $(F\setminus\lbrace0\rbrace, \times)$ is a group.\newline
(iii) The law $\times$ is both left and right distributive with respect to $+$ and for every element $a$, we have $0.a=0$ and $a.0=0$.
\end{defn}

A \textbf{strict semifield} is a semifield that is not a field.

\begin{lem}
The multiplicative group of a strict semifield is torsion-free.
\end{lem}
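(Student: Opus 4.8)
The plan is to argue by contradiction. Suppose the multiplicative group $(F\setminus\{0\},\times)$ contains a nontrivial torsion element, i.e.\ some $a\neq 1$ with $a^n=1$ for some integer $n\geq 2$. I will manufacture an additive inverse for the multiplicative unit $1$ and then show that the mere existence of such an inverse forces $(F,+)$ to be a group, so that $F$ is a field, contradicting strictness.

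First I would form the semifield analogue of a finite geometric sum, $s=1+a+a^2+\cdots+a^{n-1}$. Using left distributivity, commutativity of $+$, and the relation $a^n=1$, a one-line computation gives $a\,s=a+a^2+\cdots+a^{n-1}+a^n=s$. The whole argument then turns on whether $s$ vanishes. If $s\neq 0$, then $s$ lies in the multiplicative group, so right-multiplying $a\,s=s$ by $s^{-1}$ yields $a=1$, contradicting that $a$ has order $n\geq 2$ (note this step uses only the group structure, never commutativity of $\times$). Hence we must have $s=0$. Writing $u=a+\cdots+a^{n-1}$, this reads $1+u=0$; since $1\neq 0$ in a (nontrivial) semifield, $u$ cannot be $0$, so $1$ genuinely has an additive inverse, namely $u$.

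The step I expect to carry the real content is the promotion of ``the unit is additively invertible'' to ``every element is additively invertible''. For arbitrary $x\in F$, I would multiply $1+u=0$ on the left by $x$ and invoke distributivity together with the axiom $x\cdot 0=0$ to obtain $x+xu=0$; commutativity of $+$ then makes $xu$ a two-sided additive inverse of $x$. Consequently $(F,+)$ is an abelian group, and together with associativity of $\times$ and the two-sided distributivity of axiom (iii) this exhibits $F$ as a ring in which every nonzero element is a unit, that is, a field. This contradicts the assumption that $F$ is strict, completing the proof.

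The only delicate points are bookkeeping: one must keep track that $1\neq 0$ (so that the multiplicative group is nonempty and $u\neq 0$ is forced), and one must be careful to use left distributivity where the multiplier is written on the left and to rely on commutativity of $+$ only—never on commutativity of $\times$—so that the argument applies even if the multiplicative group is nonabelian. No genuinely hard estimate or construction is required beyond the geometric-sum trick.
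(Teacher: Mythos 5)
Your proof is correct and follows essentially the same route as the paper's: form the geometric sum $s=1+a+\cdots+a^{d-1}$, observe $as=s$, conclude $s=0$ (else invertibility of $s$ forces $a=1$), and read off an additive inverse of $1$, which makes $F$ a field and contradicts strictness. You simply make explicit two steps the paper leaves implicit --- the case analysis on whether $s$ vanishes and the multiplication by $x$ that promotes additive invertibility of $1$ to all of $F$ --- and you share the paper's (harmless, given its conventions) gloss that a ring whose nonzero elements are all invertible is a ``field'' without checking commutativity of $\times$.
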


\begin{proof}
We suppose an element $a$ of order $d>1$ in a semifield $F$. We have $a \sum_{i=0}^{d-1} a^{i} = \sum_{i=0}^{d-1} a^{i}$ so $1+\sum_{i=1}^{d-1} a^{i}=0$. Therefore, there is an additive inverse of $1$ and $F$ is a field.
\end{proof}

Enlarging the category of fields to semifields adds only a unique finite semifield that is the Boolean semifield.

\begin{cor}
A finite semifield is either a finite field or the Boolean semifield $B$.
\end{cor}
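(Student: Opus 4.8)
The plan is to split according to whether the finite semifield $F$ is a field or strict, and to exploit the preceding Lemma to force the strict case down to a two-element set. First I would dispose of the field case: if $F$ is a field there is nothing to prove. So suppose instead that $F$ is a strict semifield. By the Lemma, its multiplicative group $(F\setminus\lbrace0\rbrace,\times)$ is torsion-free. But $F$ is finite, so this is a finite group, and in a finite group every element has finite order. A torsion-free finite group is therefore trivial, whence $F\setminus\lbrace0\rbrace=\lbrace1\rbrace$ and $F=\lbrace0,1\rbrace$.

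It then remains only to pin down the additive structure on the two-element set $\lbrace0,1\rbrace$. Since $0$ is the additive neutral element, the sums $0+0$, $0+1$ and $1+0$ are all forced, so the single undetermined sum is $1+1$, which must lie in $F=\lbrace0,1\rbrace$. If $1+1=0$, then $1$ (and hence every nonzero element) would possess an additive inverse, so $F$ would be the two-element field, contradicting strictness. Thus $1+1=1$, and $(F,+,\times)$ is exactly the idempotent two-element semifield, namely the Boolean semifield $B$.

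The only genuinely delicate point is the step from \emph{torsion-free} to \emph{trivial}, which works precisely because finiteness is combined with the Lemma; everything afterwards is a direct inspection of the addition table on a set of size two. As a final sanity check I would verify that $\lbrace0,1\rbrace$ with $1+1=1$ really satisfies the semifield axioms (associativity, commutativity of $+$, and two-sided distributivity), but this is routine since all the relevant identities involve only the two elements $0$ and $1$.
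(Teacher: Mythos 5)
Your proposal is correct and follows essentially the same route as the paper: invoke the Lemma to get a torsion-free multiplicative group, use finiteness to conclude it is trivial so that $F=\lbrace 0,1\rbrace$, and then rule out $1+1=0$ on the grounds that it would make $F$ a field. Your write-up merely makes explicit two points the paper leaves implicit (the initial split into field versus strict cases, and the final verification that the idempotent addition on $\lbrace 0,1\rbrace$ satisfies the semifield axioms), which is harmless.
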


\begin{proof}
The multiplicative group of a finite strict semifield is finite and torsionfree so it is trivial. There are two elements in the semifield: $0$ and $1$. We have $1+1=1$ because otherwise it would be a field. Therefore, we get the Boolean semifield.
\end{proof}

In the following, we find that semifields are divided according to their characteristic. Semifields of characteristic $p$ are true fields so there is no need for further investigation.

\begin{thm}
For every semifield $F$, the characteristic semifield $Char(F)$ generated by the two neutral elements is one of the following isomorphism type:\newline
(i) $Char(F)$ is the Boolean semifield $B$. $F$ is of characteristic one.\newline
(ii) $Char(F)$ is one of the $\mathbb{F}_{p}$. $F$ is of characteristic $p$ with $p$ prime. Moreover, $F$ is automatically a field of characteristic $p$.\newline
(iii) $Char(F)$ is isomorphic to the positive rationals $\mathbb{Q}^{+}$. $F$ is of characteristic zero.
\end{thm}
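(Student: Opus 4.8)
The plan is to study the semiring homomorphism $\phi \colon \mathbb{N} \to F$ defined by $\phi(n) = n\cdot 1 = 1 + \cdots + 1$ (with $n$ summands) and $\phi(0) = 0$, which respects both operations by distributivity. Its image $M = \phi(\mathbb{N})$ is the additive monoid generated by $1$, and $Char(F)$ is the sub-semifield of $F$ generated by $M$. I would organize the argument around two dichotomies, namely whether some $n\cdot 1$ vanishes and whether $\phi$ is injective; the three resulting outcomes correspond exactly to cases (ii), (iii) and (i).

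First suppose $n\cdot 1 = 0$ for some $n \geq 1$, and let $p$ be the least such integer. If $p = ab$ with $1 < a, b < p$, then $(a\cdot 1)(b\cdot 1) = p\cdot 1 = 0$ while $a\cdot 1, b\cdot 1 \neq 0$ by minimality; this is impossible, since the nonzero elements of $F$ form a group and hence admit no zero divisors, so $p$ is prime. Moreover $1 + (p-1)\cdot 1 = 0$ exhibits an additive inverse of $1$, and multiplying by an arbitrary $a \in F$ shows that every element has an additive inverse; thus $(F,+)$ is a group and $F$ is a field of characteristic $p$. Finally $M = \{0, 1, \dots, (p-1)\cdot 1\}$ is already closed under inverses and isomorphic to $\mathbb{F}_p$, giving $Char(F) \cong \mathbb{F}_p$.

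From now on assume $n\cdot 1 \neq 0$ for all $n \geq 1$, so that every $n\cdot 1$ is invertible. If $\phi$ is injective, I would exhibit the isomorphism $\mathbb{Q}^+ \to Char(F)$ sending $\tfrac{a}{b}$ to $(a\cdot 1)(b\cdot 1)^{-1}$. Well-definedness and injectivity follow from $\tfrac{a}{b} = \tfrac{a'}{b'} \Leftrightarrow ab' = a'b \Leftrightarrow (a\cdot 1)(b'\cdot 1) = (a'\cdot 1)(b\cdot 1)$, using that the elements $n\cdot 1$ commute; compatibility with the two operations is the routine fraction computation, relying on distributivity. Since the image is a sub-semifield containing $0$ and $1$ and is visibly contained in $Char(F)$, it equals $Char(F)$, which is therefore isomorphic to $\mathbb{Q}^+$.

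The decisive case is when $\phi$ is not injective while no $n\cdot 1$ vanishes. Here I would first note that $F$ must be strict: in a field the relation $m\cdot 1 = n\cdot 1$ with $m > n$ would yield $(m-n)\cdot 1 = 0$, which we have excluded. Being non-injective, $\phi$ has finite cyclic image, so $M \setminus \{0\}$ is a finite subset of $F^{\times}$ that contains $1$ and is closed under multiplication (there are no zero divisors), hence a finite submonoid and therefore a subgroup. By the Lemma that the multiplicative group of a strict semifield is torsion-free, this subgroup is trivial: every nonzero $n\cdot 1$ equals $1$. In particular $1 + 1 = 2\cdot 1 = 1$, so $M = \{0,1\}$ is the Boolean semifield and $Char(F) = B$. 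This last step, ruling out a nontrivial finite period in the additive monoid, is the heart of the matter, and torsion-freeness is exactly what forbids it; should one prefer a self-contained argument, the same conclusion follows by observing that such a period would make the class of $p\cdot 1$ a multiplicatively absorbing element inside the group $F^{\times}$, which is absurd unless the period is $1$.
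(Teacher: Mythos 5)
Your proof is correct, and it is organized genuinely differently from the paper's. The paper splits on whether $Char(F)$ is finite or infinite: the finite case is delegated to the classification of finite semifields (Corollary 1.3, which rests on Lemma 1.2), and the infinite case is disposed of in one sentence by asserting that the elements $1+\dots+1$ are then pairwise distinct and that this forces $Char(F)\cong\mathbb{Q}^{+}$. You instead run a trichotomy on the map $\phi(n)=n\cdot 1$: some $n\cdot 1=0$ (giving $\mathbb{F}_{p}$ via primality of the least such $n$ and absence of zero divisors), $\phi$ injective (giving $\mathbb{Q}^{+}$ via the explicit fraction map), or $\phi$ non-injective with no vanishing (giving $B$). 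Both routes ultimately rest on the same key lemma, torsion-freeness of the multiplicative group of a strict semifield: you invoke it to kill the finite subgroup $\lbrace n\cdot 1 : n\geq 1\rbrace$, while the paper uses it inside Corollary 1.3. What your version buys is completeness at exactly the two places where the paper is terse: you prove that non-injectivity of $\phi$ without vanishing forces the Boolean case --- equivalently, that an infinite $Char(F)$ forces all $n\cdot 1$ to be distinct, which the paper asserts without argument --- and you construct and verify the isomorphism $a/b\mapsto (a\cdot 1)(b\cdot 1)^{-1}$ with $\mathbb{Q}^{+}$, which the paper also leaves implicit; what the paper's version buys is brevity, by reusing its finite classification as a black box. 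Two small points: the image of a non-injective $\phi$ is finite and eventually periodic rather than ``cyclic'' (only finiteness matters, and your subgroup argument supplies the rest), and your closing parenthetical about an absorbing element is far sketchier than your main argument --- it can be made to work (a sufficiently large and sufficiently divisible $n\cdot 1$ absorbs the whole image, hence is idempotent in $F^{\times}$, hence equals $1$, hence so does everything), but as written it should either be fleshed out or dropped, since the lemma-based argument is already complete.
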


\begin{proof}
If $Char(F)$ is finite, following Lemma 1.2, either $Char(F)$ is the Boolean semifield or $Char(F)$ is a finite field. In this case, there is an additive inverse of $1$ in $F$ and thus $F$ is a field. $Char(F)$ is the prime field of $F$.\newline
If $Char(F)$ is infinite, then $Char(F)$ includes every element of the form $1+\dots+1$ and they are all different from each other. Consequently, $Char(F)$ is isomorphic to $\mathbb{Q}^{+}$.
\end{proof}

\section{Semifields of characteristic one}

Semifields of characteristic one are exactly the Max-Plus algebras of partially ordered groups. There are no other examples.

\begin{thm}
A nonzero element $x$ of a semifield of characteristic one $F$ is said to be positive if we have $1+x^{-1}=1$. Positive elements $F^{+}$ define a lattice on the multiplicative group of $F$. Reciprocally, the union of an absorbing zero and any lattice-order group is a semifield whose addition is the supremum for the order and whose multiplication the group law. This semifield is of characteristic one.
\end{thm}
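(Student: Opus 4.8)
The plan is to move between the two structures by means of the canonical idempotent order. Since $F$ has characteristic one, $Char(F) = B$ forces $1 + 1 = 1$, and multiplying this by an arbitrary $a \in F$ gives $a + a = a$, so addition is idempotent. I would then define a relation on $F$ by setting $x \leq y$ precisely when $x + y = y$. Reflexivity is idempotency, antisymmetry comes from commutativity of $+$, and transitivity from its associativity, so $\leq$ is a partial order (with $0$ as least element, since $0 + x = x$). The first real step is to identify the lattice operations: from $x + (x + y) = x + y$ one reads off $x \leq x + y$ and likewise $y \leq x + y$, while any common upper bound $z$ satisfies $(x + y) + z = x + (y + z) = z$; hence $x + y$ is the supremum $x \vee y$, i.e. addition is the join. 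To tie this to positivity, I would rewrite $1 + x^{-1} = 1$ as $x^{-1} \leq 1$ and multiply by $x$ to get $1 + x = x$, that is $1 \leq x$; thus $F^{+}$ is exactly the positive cone $\{x : 1 \leq x\}$, and this cone recovers $\leq$ via $x \leq y \Leftrightarrow x^{-1}y \in F^{+}$.

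Next I would record the two compatibilities that upgrade the poset to a lattice-ordered group. Left and right distributivity give $z(x + y) = zx + zy$ and $(x + y)z = xz + yz$, so $x \leq y$ implies $zx \leq zy$ and $xz \leq yz$; the order is translation-invariant. Multiplying the relation $a + b = b$ on the left by $b^{-1}$ and on the right by $a^{-1}$ yields $b^{-1} + a^{-1} = a^{-1}$, so inversion is order-reversing. The decisive point is the existence of meets: writing $s = x^{-1} + y^{-1} = x^{-1} \vee y^{-1}$ and applying the order-reversing bijection $u \mapsto u^{-1}$, one checks that $s^{-1}$ is the greatest lower bound, so $x \wedge y = (x^{-1} + y^{-1})^{-1}$. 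This establishes that the multiplicative group of $F$, ordered by $\leq$, is a lattice-ordered group, which is the first assertion.

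For the converse I would start from a lattice-ordered group $(G, \cdot, \leq)$, form $F = G \cup \{0\}$ with a new absorbing element $0$, and extend the operations by $a + b = a \vee b$ on $G$ with $0$ additively neutral, and by the group law on $G$ with $0$ multiplicatively absorbing. Axiom (i) holds since $(G, \vee)$ is a commutative idempotent semigroup and adjoining $0$ as identity produces a commutative monoid, while axiom (ii) is the group structure of $G$. The one substantial verification is distributivity (iii): since left translation $x \mapsto ax$ is an order-automorphism of the $l$-group it preserves joins, giving $a(b \vee c) = ab \vee ac$, which is left distributivity; right translations give the right-hand law, and the cases involving $0$ are immediate from absorption. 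Finally $1 + 1 = 1 \vee 1 = 1$ shows the characteristic subsemifield is Boolean, so $F$ is of characteristic one.

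The two constructions are visibly mutually inverse: reinstating the order from addition returns the original lattice, and reconstructing addition as the join returns the original sum, so the correspondence is an equivalence. I expect the main obstacle to be precisely the meet computation in the forward direction. Suprema are handed to us for free as sums, but infima have to be manufactured from the group inverse together with the order-reversing property of inversion, and this is the one place where the full lattice-ordered group structure, rather than mere idempotency, is genuinely used.
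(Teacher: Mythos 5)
Your proof is correct, and in the forward direction it takes a genuinely different route from the paper's. The paper works \emph{cone-first}: it takes $F^{+}$ as a candidate positive cone and verifies the classical axioms of a partially ordered group (it contains $1$, $x,x^{-1}\in F^{+}$ forces $x=1$, stability under conjugation, and closure of $F^{+}$ under $+$), then observes that $x+y$ is the least upper bound of $x$ and $y$; the existence of meets --- hence the word ``lattice'' --- is left implicit, resting on the standard fact that a partially ordered group with binary joins is automatically lattice-ordered. You instead work \emph{order-first}: you define $x\leq y$ by the absorption identity $x+y=y$, check the partial-order axioms directly from idempotency, commutativity and associativity, identify $+$ as the join, get compatibility with multiplication from the two distributivity laws (which is exactly translation invariance, so you never need the cone axioms at all), and then manufacture meets explicitly as $x\wedge y=(x^{-1}+y^{-1})^{-1}$ via the order-reversing character of inversion. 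This buys you two things: the lattice claim is fully proved rather than implicit, and you sidestep a weak point of the paper's verification, which checks closure of the cone under addition but never under multiplication (in your setting, $1\leq x$ and $1\leq y$ give $1\leq y\leq xy$ by translation invariance). The converse directions of the two proofs are essentially identical. One small omission you share with the paper: both arguments implicitly use that a sum of two nonzero elements is nonzero, so that joins of group elements stay in the group and $(x^{-1}+y^{-1})^{-1}$ makes sense; this is easy to supply, since $x+y=0$ with $x\neq 0$ would give $1+x^{-1}y=0$, whence $1=1+(1+x^{-1}y)=(1+1)+x^{-1}y=1+x^{-1}y=0$, a contradiction.
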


\begin{proof}
We check that the set of positive elements satisfy the four classical conditions to define a partially ordered structure on a group.\newline
First, $1 \in F^{+}$ because $1+1=1$.\newline
If we have $x,x^{-1} \in F^{+}$, then $x=1$. Indeed, if we have $1+x=1$ and $1+x^{-1}=1$, then we have $x+1=x$ and $x=1$.\newline
If $x \in F^{+}$, then for every nontrivial element $a$, we have $a^{-1}.x.a \in F^{+}$ because $1+a^{-1}.x^{-1}.a=a^{-1}(a+x^{-1}.a)=a^{-1}(1+x^{-1})a=1$.\newline
If we have $x,y \in F^{+}$, then we have $x+y \in F^{+}$ because $1+\dfrac{1}{x+y}=\dfrac{1+x+y}{x+y}=\frac{x+y}{x+y}=1$.\newline
Therefore, $F^{+}$ defines a partial order on the multiplicative group of $F$.\newline
We suppose $a \in F$ such that $a\geq x$ and $a \geq y$. Then, we have $a+a\geq x+y$ that is $a \geq x+y$. Therefore, $x+y$ is the least upper bound of $x$ and $y$. The multiplicative is lattice-ordered.\newline
We suppose $(G,\times,\geq)$ is a lattice-ordered group. Then we define on $\lbrace0\rbrace \cup G$ a semifield structure. The multiplication is the group law and $0$ is absorbing. The addition of two elements is the least upper bound (using the lattice structure) thus it is associative. The addition is idempotent so the semifield is of characteristic one. We have to check that the law $\times$ is distributive with respect to $+$. This comes from the fact that the order structure is translation-invariant in a group.
\end{proof}

We give two classical examples of semifields of characteristic one.

\begin{ex}
The Boolean semifield corresponds to the trivial lattice-order group.
The tropical semifield $(\mathbb{Z}\cup\lbrace-\infty\rbrace,max,+)$ corresponds to $(\mathbb{Z},+,\geq)$.
\end{ex}

\section{Semifields of characteristic zero}

In the following, we assume that semifields are strict semifields of characteristic zero. We leave aside fields of characteristic zero.\newline

\subsection{Cancellative semifields}

A remarkable class of strict semifields of characteristic one is given by the positive subsemifields of ordered fields. Among them there are $\mathbb{Q}^{+}$ or $\mathbb{R}^{+}$. In order to enlarge the scope, we consider strict semifields that keep a great similarity with those examples.

\begin{defn}
A semifield $F$ is cancellative if its underlying semigroup is cancellative, that is $x+z=y+z$ implies $x=y$ for every $x,y,z \in F$
\end{defn}

\begin{thm}
Every cancellative strict semifield $F$ of characteristic zero is embedded in a partially ordered ring $A_{F}$ such that the set of nonzero positive elements is closed under inversion (they are all invertible) and generate the whole ring (every element is a difference between two nonzero positive elements). Reciprocally, in any such ring, the set of positive elements is a cancellative strict semifield of characteristic zero.
\end{thm}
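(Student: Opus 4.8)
The plan is to realise $A_F$ as the Grothendieck group completion of the additive monoid $(F,+)$ and to transport the multiplication of $F$ onto it. Since $F$ is cancellative, $(F,+)$ is a cancellative commutative monoid, so its group completion $A_F$ consists of formal differences $x-y$ with $x,y\in F$, subject to the relation $x-y=x'-y'$ whenever $x+y'=x'+y$, with addition $(x-y)+(u-v)=(x+u)-(y+v)$ making $A_F$ an abelian group. Cancellativity is exactly what guarantees that the canonical map $F\to A_F$, $x\mapsto x-0$, is injective, and this is what yields the asserted embedding.

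First I would extend the multiplication by the formula $(x-y)(u-v)=(xu+yv)-(xv+yu)$, which is forced by distributivity. The main technical step, and the one I expect to be the principal obstacle, is to check that this is well defined on equivalence classes and that it endows $A_F$ with an (a priori noncommutative) ring structure. Well-definedness in each argument reduces to a short computation: replacing $x-y$ by $x'-y'$ with $x+y'=x'+y$ and expanding shows the two candidate products agree precisely because of left distributivity, and symmetrically the right argument uses right distributivity; this is exactly where both distributivity laws of Definition~1.1 are consumed. Associativity, bilinearity of the multiplication over the addition, and the fact that $1-0$ is a two-sided unit then follow from the corresponding identities in $F$ by routine expansion, so $A_F$ is a ring.

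Next I would take as positive cone $P$ the image of $F$, that is the elements of the form $x-0$, and order $A_F$ by $a\le b\iff b-a\in P$. The inclusions $P+P\subseteq P$ and $P\cdot P\subseteq P$ are immediate, so this is a ring order; antisymmetry amounts to $P\cap(-P)=\{0\}$, which is where strictness enters: an equality $x-0=0-y$ forces $x+y=0$, and if $x\ne 0$ then $1+x^{-1}y=x^{-1}(x+y)=0$, so $F$ would be a field, contrary to hypothesis. Each nonzero positive element $x-0$ is invertible with inverse $x^{-1}-0\in P$ because $F\setminus\{0\}$ is a multiplicative group, and every element $x-y$ is the difference of the two \emph{nonzero} positive elements $(x+1)-0$ and $(y+1)-0$ (here $x+1\ne 0$ for the same strictness reason, and $1\ne 0$ by characteristic zero), so the nonzero positives generate $A_F$ as required.

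For the converse, given a partially ordered ring $A$ in which the nonzero positive elements are invertible, closed under inversion, and generate $A$, I would verify directly that the positive cone $P$ (including $0$) is a semifield: $(P\setminus\{0\},\times)$ is a group by the closure and inversion hypotheses, with unit $1=aa^{-1}\in P$; $(P,+)$ is a commutative monoid with neutral element $0$, and it is cancellative since it sits inside the additive group of $A$; distributivity and $0\cdot a=0$ are inherited from $A$. Strictness follows once more from $P\cap(-P)=\{0\}$, since a nonzero $a\in P$ possessing an additive inverse in $P$ would give $-a\in P$. Finally $P$ cannot be of characteristic one, because $1+1=1$ is impossible in a nonzero ring, nor of characteristic $p$, since that would make it a field by Theorem~1.4 while no nonzero element of $P$ is additively invertible; by the trichotomy of Theorem~1.4 it is therefore a cancellative strict semifield of characteristic zero.
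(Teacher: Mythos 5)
Your proposal is correct and takes essentially the same route as the paper: embed $F$ via the Grothendieck group of its cancellative additive monoid, transport the multiplication to get a ring, take $F$ as the positive cone, and verify the converse directly on the positive cone of the ordered ring. You merely supply details the paper leaves implicit --- the explicit product formula and its well-definedness via distributivity, the use of strictness for $P\cap(-P)=\{0\}$, the $(x+1)-(y+1)$ trick so that generators are genuinely nonzero, and the trichotomy argument pinning down characteristic zero --- all of which are welcome elaborations rather than a different approach.
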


\begin{proof}
A cancellative semigroup is embedded in its Grothendieck group $A$. The multiplication of $F$ induces a multiplication on $A$ so $A$ inherits a ring structure. The semifield $F$ defines in $A$ a partial order if we consider $F$ as the subset of positive elements of $A$. $F$ is closed under addition and multiplication and its unique element whose opposite is also in $F$ is $0$. Besides, every nonzero element of $F$ is inversible and $F$ generates $A$.\newline
Reciprocally, we consider a partially ordered ring $A$ whose set of nonzero positive elements are invertible and generates $A$. The union of the zero and this set of stricly positive elements is closed under addition, multiplication and inversion. Therefore, it is a semifield $F$. Since $F$ embeds in a ring, it is cancellative and thus is of characteristic zero.
\end{proof}

This class of semifields obviously includes the positive subsemifields of ordered fields but there are far more general objects. Indeed, there is no product in the category of fields whereas we can define a product of rings.

\begin{ex}
The partially ordered ring $\mathbb{Q}^{2}$ with term to term addition and multiplication defines a cancellative semifield of characteristic zero. In a same way, we can construct semifields of the form $\lbrace 0 \rbrace \cup \mathcal{F}(X,\mathbb{R}^{\ast}_{+})$ starting from the ring of functions $\mathcal{F}(X,\mathbb{R})$.\newline
Another example is the semifield of rational fractions in one variable with strictly positive coefficients (plus an absorbing zero).
\end{ex}

Though there are no nontrivial nilpotent elements in a semifield, the partially ordered ring generated by a semifield may fail to be reduced.

\begin{ex}
The ring of dual numbers $\mathbb{R}\left[ \epsilon \right] $ with $\epsilon^{2}=0$ is generated by the cancellative semifield $\lbrace 0 \rbrace \cup  \lbrace a+b.\epsilon, a \in \mathbb{R}^{\ast}_{+}, b \in \mathbb{R}\rbrace$. The partial order on $\mathbb{R}\left[ \epsilon \right]$ is given by the total order on the first coordinate.
\end{ex}

\subsection{Noncancellative semifields}

There is a well-defined notion of quotient in the category of semifields of characteristic zero. In the following, we prove that however complicated a noncancellative semifield may be, it is a quotient of a cancellative semifield.

\begin{thm}
Every strict semifield of characteristic zero is a quotient of a cancellative strict semifield of characteristic zero.
\end{thm}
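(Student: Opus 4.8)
The plan is to realize $F$ as the image of a surjective morphism of semifields whose source is cancellative, strict and of characteristic zero; since the paper grants a well-defined notion of quotient in this category, a surjective morphism exhibits its target as the quotient of its source by the induced (kernel) congruence. Write $G = F\setminus\{0\}$ for the multiplicative group of $F$, which is torsion-free by Lemma 1.2. The cancellative model I would use is the semifield of fractions $C=\mathrm{Frac}(\mathbb{N}[G])$ of the semigroup semiring $\mathbb{N}[G]$, whose elements are the formal finite sums $\sum n_g\,g$ with $n_g\in\mathbb{N}$, added coefficientwise and multiplied by extending the group law of $G$ through distributivity.

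First I would record the elementary fact that in a strict semifield a sum of nonzero elements is again nonzero: if $a+b=0$ with $a\neq 0$, then multiplying by $a^{-1}$ gives $1+a^{-1}b=0$, so $1$ would have an additive inverse and $F$ would be a field, contrary to strictness. By induction any finite sum of elements of $G$ is nonzero. Consequently the evaluation map $\phi:\mathbb{N}[G]\to F$, $\sum n_g\,g\mapsto \sum n_g\,g$ (the right-hand sum now computed in $F$), is a semiring morphism sending every nonzero element to an invertible element of $F$. This is exactly what allows me to extend $\phi$ to $\bar\phi:C\to F$ by $\bar\phi(x/y)=\phi(x)\phi(y)^{-1}$: it is well defined because $x/y=x'/y'$ means $xy'=x'y$ in $\mathbb{N}[G]$, it is a semifield morphism, and it is surjective because $\phi$ already attains $0$ and every $g=\phi(1\cdot g)\in G$.

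It then remains to verify that $C$ really is a cancellative strict semifield of characteristic zero. The additive monoid of $\mathbb{N}[G]$ is a free commutative monoid, hence cancellative and possessing no nonzero additive inverses, so $1$ has no additive inverse in $C$ and $C$ is strict; being cancellative it is of characteristic zero by Theorem 3.2, and its characteristic subsemifield is the copy of $\mathbb{Q}^{+}$ formed by the fractions $(m\cdot 1)/(n\cdot 1)$. Additive cancellativity transfers to $C$: after reducing to a common denominator, an identity $u+w=v+w$ in $C$ becomes, upon multiplicative cancellation of the denominator and then additive cancellation in $\mathbb{N}[G]$, the equality $u=v$.

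The one genuine obstacle is the claim that $\mathbb{N}[G]$ has no zero divisors, since this is what makes it multiplicatively cancellative and makes $\mathrm{Frac}(\mathbb{N}[G])$ a bona fide semifield. Here I would use that a torsion-free abelian group is orderable: fixing a translation-invariant total order on $G$ and comparing the largest terms of two nonzero elements shows that their product carries a positive, uncancellable leading coefficient, hence is nonzero. If the multiplication of $F$ is not assumed commutative, this is the delicate step: one must instead combine orderability of the torsion-free group $G$ with the Ore conditions in order to form a noncommutative semifield of fractions, but the overall architecture of the argument is unchanged.
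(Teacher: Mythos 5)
Your construction is correct in substance and shares the paper's overall architecture --- exhibit $F$ as the image of a surjective morphism from the semifield of fractions of a semiring of formal sums --- but your source object is genuinely different, and in two respects better. The paper takes $L_F$, formal sums of \emph{all} elements of $F$ modulo the relations $[x]+[\theta x]=[(\theta+1)x]$ for rational $\theta$, and then forms fractions; you take the free semigroup semiring $\mathbb{N}[G]$ on the multiplicative group $G=F\setminus\{0\}$, with no relations at all. This buys you, first, that additive cancellativity of $\mathbb{N}[G]$ is honest free-commutative-monoid cancellativity, whereas the paper asserts cancellativity of $L_F$ ``because it is formal sums'' even though a congruence has been imposed (a quotient of a cancellative monoid need not remain cancellative, so the paper's claim actually requires an argument it does not give). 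Second, you confront the question the paper passes over in silence: why fractions of the formal-sum semiring can be formed at all. That is where torsion-freeness of $G$ (Lemma 1.2) enters through orderability, and it is the real content of your final paragraph. A small side correction: to see that $C$ has characteristic zero, invoke Theorem 1.4 rather than Theorem 3.2 (which presupposes characteristic zero); additive cancellativity rules out characteristic one, and strictness rules out characteristic $p$.

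One step of your last paragraph is misstated, though the repair is one line. In a semiring, ``no zero divisors'' does \emph{not} imply multiplicative cancellativity: in the Boolean polynomial semiring $B[x]$ one has $(1+x)(1+x^2)=(1+x)(1+x+x^2)=1+x+x^2+x^3$ with no zero divisors anywhere; and in $\mathbb{N}[G]$ the absence of zero divisors is trivial anyway, since nonnegative coefficients cannot cancel, so your leading-term argument as stated proves nothing new. What you actually need --- both for transitivity of your relation $x/y\sim x'/y'\iff xy'=x'y$ and for $C$ to be a semifield --- is multiplicative cancellativity, and your ordering argument delivers it once pointed at the right target: the total order on the torsion-free abelian group $G$ shows the group \emph{ring} $\mathbb{Z}[G]$ is a domain, hence multiplicatively cancellative, and cancellativity restricts to the subsemiring $\mathbb{N}[G]$. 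With that emendation the proof is complete in the commutative case. Your closing caveat about noncommutative $F$ is honest but optimistic --- torsion-free groups need not be orderable, and the Ore condition can fail, e.g.\ when $G$ contains a free subgroup --- but note that the paper's own formation of ``fractions of $L_F$'' suffers from exactly the same difficulty, so this limitation is shared rather than introduced by your approach.
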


\begin{proof}
We consider the semigroup $L_{F}$ of formal sums of elements of the semifield $F$ modulo the relations $[x]+[\theta.x]=[(\theta+1)x]$ when $\theta$ is a rational number. The multiplicative law of $F$ induces a distributive multiplicative law on $L(F)$. Since the additive law of $L_{F}$ is given by the addition of formal sums, it is cancellative. Then, we consider $Q(F)$ formed by fractions of elements of $L_{F}$. $Q(F)$ is a cancellative strict semifield of characteristic zero.\newline
The projection morphism $\phi$ from $Q(F)$ to $F$ is defined by $\phi([x])=x$ an $\phi([x]+[y])=x+y$.
\end{proof}

There is a wide variety of noncancellative semifields but they always can be constructed as quotients of cancellative semifields.

\begin{ex}
The semifield $\lbrace0\rbrace \cup \cup_{k \in \mathbb{Z}} \lbrace qX^{k}, q \in \mathbb{Q}^{\ast}_{+}\rbrace$ with the usual multiplication and such that $qX^{k}+q'X^{k'}=(q+q')X^{k}$ when $k=k'$ and $qX^{k}+q'X^{k'}=qX^{k}$ when $k>k'$ is not cancellative. However, it can be interpreted as the semifield of the asymptotic equivalence classes of the semifield of rational fonctions in one variable with positive rational coefficients. The latter semifield is cancellative.
\end{ex}

\nopagebreak
\vskip.5cm
\end{document}